\newtheorem{theorem}{Theorem}[section]
\newtheorem{corollary}[theorem]{Corollary}
\newtheorem{lemma}[theorem]{Lemma}
\theoremstyle{definition}
\numberwithin{equation}{section}
\theoremstyle{definition}
\newtheorem{remark}[theorem]{Remark}
\newcommand{\Cb}{\mathbb{C}}
\newcommand{\Pb}{\mathbb{P}}
\newcommand{\Zb}{\mathbb{Z}}
\newcommand{\beq}{\begin{eqnarray}}
\newcommand{\eeq}{\end{eqnarray}}
\DeclareMathOperator{\Aut}{Aut}
\DeclareMathOperator{\IJac}{J}
\title{Irrationality of the general smooth quartic $3$-fold using intermediate Jacobians}
\author{Benson Farb\thanks{Supported in part by National Science Foundation Grant No. DMS-181772 and the Eckhardt Faculty Fund.}}
\begin{document}

\maketitle
\begin{abstract}
We prove that the intermediate Jacobian of the Klein quartic $3$-fold $X$ 
is not isomorphic, as a principally polarized abelian variety, to a product of Jacobians of curves.  As corollaries we deduce (using a criterion of Clemens-Griffiths) that $X$, as well as the general smooth quartic 
$3$-fold, is irrational.  These corollaries were known: Iskovskih-Manin \cite{IM} proved that 
every smooth quartic $3$-fold is irrational.  However, the method of 
proof here is different than that of \cite{IM}, is significantly simpler, and produces an explicit example.

\end{abstract}

\section{Introduction}

A {\em smooth quartic $3$-fold} is a smooth, degree $4$ 
hypersurface $Y$ in complex projective space $\Pb^4$.  For such a $Y$ there is a Hodge decomposition \[H^3(Y;\Cb)=H^{2,1}(Y)\oplus H^{1,2}(Y)\] 
and an attached {\em intermediate Jacobian} 
\[\IJac(Y):=\frac{H^{1,2}(Y)}{\displaystyle i(H_3(Y;\Zb))}\]
where the embedding 
\[i:H_3(Y;\Zb) \to H^{2,1}(Y)^*\cong H^{1,2}(Y)\] is defined 
by sending $\alpha\in H_3(Y;\Zb)$ to the linear functional $\omega\mapsto\int_\alpha\omega$, followed by the isomorphism 
$H^{2,1}(Y)^*\to H^{1,2}(Y)$ given by the intersection form on $Y$.    The complex torus $\IJac(Y)$ is a $30$-dimensional abelian variety.  It has a principal polarization defined by the Hermitian form 
\[Q(\alpha,\beta):=2i\int_Y \alpha\wedge\bar{\beta}.\]

The {\em Klein quartic $3$-fold} $X$  is the smooth, degree $4$ hypersurface 
\[X:=\{[x_0:\cdots :x_4]: x_0^3x_1+x_1^3x_2+x_2^3x_3 +x_3^3x_4+x_4^3x_0=0\}\subset\Pb^4.\] 
$X$ admits a non-obvious faithful action of $\Zb/61\Zb\rtimes \Zb/5\Zb$ by 
automorphisms; see \S\ref{section:proof}.  We will use these symmetries to prove the following.

\begin{theorem}[{\bf Intermediate Jacobian}]
\label{Theorem:IJ1}
The intermediate Jacobian $\IJac(X)$ of the Klein quartic $3$-fold $X$ is not isomorphic, as a principally polarized abelian variety, to a product of Jacobians of smooth curves.
\end{theorem}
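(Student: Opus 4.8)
The plan is to exploit the large automorphism group $G = \Zb/61\Zb \rtimes \Zb/5\Zb$ acting on $X$, and hence on $\IJac(X)$, and to derive a contradiction from the hypothetical decomposition $\IJac(X) \cong \prod_i \J(C_i)$ as principally polarized abelian varieties (ppav's). The starting observation is that $\dim \IJac(X) = 30$ is divisible by $5$, but more importantly the order-$61$ automorphism $\sigma$ acts on $H^{1,2}(X)$, a $30$-dimensional space, and since $61$ is prime and the representation is defined over $\Qb$ (it comes from an action on $H_3(X;\Zb)$), the eigenvalues of $\sigma$ on $H^3(X;\Cb)$ must be primitive $61$st roots of unity, each appearing with the same multiplicity; as $61 - 1 = 60$ and $\dim H^3(X;\Cb) = 60$, every primitive $61$st root of unity occurs exactly once. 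This rigidly constrains the rational representation.

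**First** I would recall the uniqueness of the decomposition of a ppav into indecomposable ppav factors (the classical result that $\IJac(X) \cong \prod \J(C_i)$ forces, after reordering, a matching of the indecomposable factors), so that any automorphism of $\IJac(X)$ permutes the set of Jacobian factors $\J(C_i)$ up to isomorphism, and induces a permutation action on the index set compatible with automorphisms of the individual factors. **Next**, I would analyze how $\sigma$ (order $61$) acts on this collection: $61$ is prime, so $\sigma$ either fixes a factor $\J(C_i)$ or permutes it in an orbit of size $61$. An orbit of size $61$ would need $\sum \dim \J(C_i) \geq 61 \cdot g \geq 61 > 30$ over that orbit, impossible; hence $\sigma$ fixes each factor $\J(C_i)$, acting by an automorphism of each $\J(C_i)$ preserving the principal polarization — equivalently (by Torelli, for $g \geq 2$) by an automorphism of the curve $C_i$ itself, or by $\pm 1$ composed with such in the hyperelliptic/genus-$1$ cases. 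Now on each factor $\J(C_i)$ of dimension $g_i$, the eigenvalues of $\sigma$ on $H^{1,0}(C_i) \oplus H^{0,1}(C_i)$ are $61$st roots of unity, and by the earlier count, across all factors the multiset of eigenvalues on $H^3(X;\Cb) = \bigoplus (H^{1,0}(C_i) \oplus H^{0,1}(C_i))$ is exactly the set of all $60$ primitive $61$st roots of unity, each once. In particular $1$ is not an eigenvalue, so $\sigma$ acts without fixed points on $H^{1,0}(C_i)$, forcing $g_i \geq 1$ and, by the Hurwitz–Chevalley–Weil / Lefschetz fixed point analysis, constraining the quotient $C_i / \la \sigma \ra$ and the ramification of $C_i \to C_i/\la\sigma\ra$.

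**The key step** — and the main obstacle — is to push this to a contradiction using the $\Zb/5\Zb$ part together with the $\Zb/61\Zb$ part, or using the polarization more finely. Here is the route I expect to work: since $\sigma$ of order $61$ acts on each curve $C_i$ of genus $g_i$ without fixed points on $H^{1,0}$, the Chevalley–Weil formula shows the genus $g_i$ and the data $(h_i, r_i)$ of the cover $C_i \to C_i/\la\sigma\ra$ (genus $h_i$ of the quotient, $r_i$ branch points) satisfy $2g_i - 2 = 61(2h_i - 2) + 60 r_i$, so $g_i \equiv -1 \pmod{30}$ when $h_i = 0$, forcing $g_i \in \{29, 59, \dots\}$; combined with $\sum g_i = 30$ this leaves essentially $g_1 = 29$ (one branch point is impossible for a $\Zb/61$ cover, so actually $r_i \geq 2$ and we need $h_i = 0, r_i = 2$ giving $g_i = 30$... ) — so the decomposition must be $\IJac(X) \cong \J(C)$ for a single curve $C$ of genus $30$, a $\Zb/61$-cover of $\Pb^1$ branched at $2$ points, which is itself forced to be (a quotient of) a superelliptic curve $y^{61} = x^a$. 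At that point I would bring in the residual $\Zb/5\Zb$-symmetry and the specific Hodge-theoretic / polarization invariants of $\IJac(X)$ computed from the equation of $X$ (e.g. the CM-type or the decomposition of $H^3(X)$ under the full group $G$, which one can read off since $X$ is acted on by a large abelian-by-cyclic group and its cohomology decomposes into characters), and show this is incompatible with the $\Zb/61 \rtimes \Zb/5$-representation on $H^1$ of any such curve $C$ — concretely, the five $\Zb/5$-conjugate eigenlines of $\sigma$ that would be identified under the $\Zb/5$-action on $\J(C)$ cannot be matched with the actual $G$-module structure of $H^3(X;\Cb)$. The delicate part is carrying out this last comparison rigorously: it requires computing the $G$-representation on $H^3(X;\Cb)$ explicitly (via the Griffiths residue description of $H^{2,1}$ in terms of the Jacobian ring of the quartic form, which is where the monomials and the group characters enter), and then checking no curve's $H^1$ realizes it — this is the computational heart of the argument and the step most likely to hide subtleties.
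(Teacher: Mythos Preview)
Your opening and the eigenvalue analysis are correct and in fact sharper than the paper's route: the observation that $H^3(X;\Qb)$, being $60$-dimensional with a faithful $\Zb/61\Zb$-action, must be the unique nontrivial irreducible $\Qb[\Zb/61\Zb]$-module forces $\sigma$ to act with order $61$ on \emph{every} indecomposable factor $\J(C_i)$, not just one. The paper instead uses only the Wiman-type bound $61\le 4g_i+2$ on a single factor to get $g_i\ge 15$, and then a separate argument to trap the $\Zb/5\Zb$ on that same factor.

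Your Riemann--Hurwitz arithmetic, however, is wrong. With quotient genus $h_i=0$ and $r$ branch points (each totally ramified, as $61$ is prime) one gets $2g_i-2=-122+60r$, i.e.\ $g_i=30(r-2)$, so $g_i\equiv 0\pmod{30}$, not $-1$; and $r=2$ gives $g_i=0$, not $30$. The correct conclusion (after discarding $g_i=0$) is $g_i\ge 30$, hence there is a single factor $\J(C)$ with $g(C)=30$ and $r=3$. So your intermediate target is right, but the computation reaching it needs repair.

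The real gap is the endgame. You propose to compute the $G$-module $H^3(X;\Cb)$ via Griffiths residues and rule out any matching curve $C$; this is left unexecuted, is vague at the crucial comparison step, and is far harder than necessary. Once you know $G=\Zb/61\Zb\rtimes\Zb/5\Zb$ acts faithfully on $\J(C)$ with $g(C)\le 30$, Torelli (using that $|G|$ is odd) lifts this to a faithful $G$-action on $C$ itself, and then a theorem of Schweizer on odd-order metacyclic automorphism groups of curves gives $|G|\le 9(g-1)\le 9\cdot 29=261<305=|G|$, a contradiction. This is precisely the paper's Lemma~\ref{lemma:Riemann}(2), and it finishes the argument in one line. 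In short: your path to ``$G$ acts on a single $\J(C)$'' is a legitimate and arguably more conceptual alternative to the paper's, but you are missing the key finishing lemma and replacing it with something much heavier and only sketched.
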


A short argument using resolution of singularities (Corollary 3.26 of \cite{CG}) gives the {\em Clemens-Griffiths criterion} : if $Y$ is rational then $\IJac(Y)$ is isomorphic as a principally polarized abelian variety (henceforth p.p.a.v.) to a product of Jacobians of smooth curves.   Theorem~\ref{Theorem:IJ1} thus implies:

\begin{corollary}[{\bf Irrationality of Klein}]
\label{cor:Klein1}
The Klein quartic $3$-fold is {\em irrational}: it is not birational to $\Pb^3$.
\end{corollary}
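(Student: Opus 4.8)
The corollary follows immediately from Theorem~\ref{Theorem:IJ1}: were $X$ birational to $\Pb^3$, the Clemens--Griffiths criterion recalled above would exhibit $\IJac(X)$ as a product of Jacobians of smooth curves, contrary to Theorem~\ref{Theorem:IJ1}. So all the content lies in Theorem~\ref{Theorem:IJ1}, and the plan is to prove that instead, pitting the group $G=\Zb/61\Zb\rtimes\Zb/5\Zb$ of automorphisms against the arithmetic of curves.

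First I would record the $G$-action explicitly: $X$ is preserved by $\tau\colon x_i\mapsto\zeta^{(-3)^i}x_i$ ($i\in\Zb/5\Zb$, $\zeta$ a primitive $61$st root of unity), which has order $61$ since $(-3)^5\equiv1\pmod{61}$, and by the cyclic shift $\sigma$ of the five coordinates, of order $5$; conjugation by $\sigma$ sends $\tau$ to $\tau^{20}$, and $20$ has order $5$ in $(\Zb/61\Zb)^\times$, so $\langle\tau,\sigma\rangle\cong G$. Now suppose for contradiction that $\IJac(X)\cong\prod_i\J(C_i)$ as a principally polarized abelian variety, with each $C_i$ smooth of genus $g(C_i)\ge1$ and $\sum_i g(C_i)=30$. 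Each $\J(C_i)$ is indecomposable as a p.p.a.v., and such a decomposition is unique up to reordering, so $G\subseteq\Aut(\IJac(X),\Theta)$ permutes the factors $\J(C_i)$. There are at most $30<61$ of them, so every orbit of the normal subgroup $\Zb/61\Zb$ on the set of factors is a singleton; hence $\Zb/61\Zb$ acts on each $\J(C_i)$ preserving its polarization.

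I would then split into two cases. If $\Zb/61\Zb$ acts trivially on every $\J(C_i)$, it acts trivially on $\IJac(X)$, hence on the lattice $H_3(X;\Zb)$ and on $H^3(X;\Cb)$, so $\tr(\tau\mid H^3(X;\Cb))=60$; but $\tau$ has distinct nonzero weights, so $\Fix(\tau)$ in $\Pb^4$ consists of the five coordinate points, all lying on $X$, and the Lefschetz fixed-point formula --- using that $\tau$ acts trivially on $H^0,H^2,H^4,H^6$ and that $H^1=H^5=0$ --- yields $4-\tr(\tau\mid H^3(X;\Cb))=\chi(\Fix\tau)=5$, a contradiction. Otherwise $\Zb/61\Zb$ acts nontrivially on some factor $\J(C)$; then $g(C)\ge2$, Torelli together with the oddness of $|G|$ produces a faithful $\Zb/61\Zb$-action on $C$, and Riemann--Hurwitz forces the quotient $C/(\Zb/61\Zb)$ to be $\Pb^1$ branched at exactly three points, with $g(C)=30$. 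Hence $C$ is the only factor and $\IJac(X)\cong\J(C)$. The kernel of $G\to\Aut(\IJac(X))$ is a normal subgroup of $G$ meeting $\Zb/61\Zb$ trivially and not of order $5$ (the $5$-Sylow of $G$ is not normal), so $G$ acts faithfully on $\IJac(X)=\J(C)$ and therefore, again by oddness, faithfully on $C$. Then $\Zb/5\Zb=G/(\Zb/61\Zb)$ acts on $\Pb^1=C/(\Zb/61\Zb)$, and faithfully, since $C\to\Pb^1\to C/G$ has last map of degree $305/61=5$. But an order-$5$ automorphism of $\Pb^1$ is conjugate to $z\mapsto\zeta_5 z$, which has only two fixed points and no other orbit of size less than $5$, so it preserves no three-element set --- contradicting the fact that the branch locus of $C\to\Pb^1$, a $G$-invariant set of size $3$, is preserved by $\Zb/5\Zb$. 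Both cases being impossible, $\IJac(X)$ is not a product of Jacobians of smooth curves.

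The main obstacle is the second case, and the point I would stress is that both primes are genuinely needed. Applying the Lefschetz formula to each $\tau^k$ with $k\not\equiv0\pmod{61}$ (which likewise fixes exactly the five coordinate points of $X$) shows that the $\Zb/61\Zb$-representation on $H^3(X;\Cb)$ is the regular representation with its trivial summand removed --- precisely what the cohomology $H^1(C;\Cb)$ of a $\Zb/61\Zb$-cover of $\Pb^1$ branched at three points also is; so the representation alone cannot rule out the second case, and one really needs the soft geometric remark that the residual $\Zb/5\Zb$ cannot fix three points on the quotient $\Pb^1$. Everything else --- the uniqueness and functoriality of the indecomposable decomposition of p.p.a.v.'s, the ``$61>30$'' orbit argument, Riemann--Hurwitz, the Lefschetz count, and the final appeal to Clemens--Griffiths --- is routine.
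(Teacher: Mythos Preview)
Your argument is correct and essentially self-contained, but it follows a different endgame than the paper. Both proofs begin the same way: $G=\Zb/61\Zb\rtimes\Zb/5\Zb$ acts on $\IJac(X)$, a putative product of Jacobians has at most $30$ indecomposable factors, and since $61>30$ the subgroup $\Zb/61\Zb$ fixes every factor. From there the paper invokes Lemma~\ref{lemma:Riemann}: the Wiman-type bound $61\le 4g+2$ forces some factor $B=\mathrm{Jac}(C_1)$ with $g(C_1)\ge 15$, a dimension count shows $B$ is $\Zb/5\Zb$-invariant, and then Schweizer's inequality $|G|\le 9(g-1)$ for odd metacyclic groups yields $305\le 9\cdot 29=261$, a contradiction. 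You instead run Riemann--Hurwitz directly to pin down $g(C)=30$ exactly (quotient $\Pb^1$, three branch points), conclude $\IJac(X)\cong\mathrm{Jac}(C)$, lift the faithful $G$-action to $C$, and finish with the pleasant elementary observation that an order-$5$ automorphism of $\Pb^1$ has orbits only of size $1$ or $5$ and hence cannot preserve the three-point branch locus. Likewise, for faithfulness of $G$ on $H^3(X)$ the paper quotes a general result (\cite{H}, Chap.~1, Cor.~3.18), whereas you recover what is needed by a Lefschetz fixed-point count.

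What each approach buys: the paper's route is cleaner once Lemma~\ref{lemma:Riemann} is in hand and would adapt immediately to other symmetry groups of the same shape, but it imports Schweizer's metacyclic bound as a black box. Your route is more computational but entirely elementary---nothing beyond Riemann--Hurwitz, Torelli, and the structure of $\Aut(\Pb^1)$---so it sidesteps the need for Lemma~\ref{lemma:Riemann}(2) altogether, at the cost of being more tightly tied to the specific numerics $61$ and $5$.
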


The intermediate Jacobian determines a {\em period mapping} $\IJac: {\mathcal M}_{4,3}\to {\mathcal A}_{30}$ from the moduli space of smooth quartic $3$-folds to the moduli space of $30$-dimensional principally polarized abelian varieties.  $\IJac$ is a holomorphic map between quasiprojective varieties.  Since the target ${\mathcal A}_{30}$ is the quotient of a bounded symmetric domain by an arithmetic lattice, 
Theorem 3.10 of Borel \cite{Bo} gives that $\IJac$ is in fact a morphism.  Let ${\mathcal P}\subset{\mathcal A}_{30}$ denote the subvariety consisting of products of Jacobians of smooth curves.   Then 
$\IJac^{-1}({\mathcal P})$ is a subvariety of ${\mathcal M}_{4,3}$.  Theorem \ref{Theorem:IJ1} implies that the inclusion 
$\IJac^{-1}({\mathcal P})\subset {\mathcal M}_{4,3}$ is strict.  The irreducibility of ${\mathcal M}_{4,3}$ then gives: 

\begin{corollary}[{\bf Irrationality is general}]
\label{cor:general}
The general smooth quartic $3$-fold is irrational.\footnote{In other words, there is a subvariety $V\subsetneq {\mathcal M}_{4,3}$ such that each 
$X\in  {\mathcal M}_{4,3}\setminus V$ is irrational.}
\end{corollary}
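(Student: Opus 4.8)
The plan is to run the standard ``generic member'' argument sketched above: exhibit a proper Zariski-closed $V\subsetneq\mathcal{M}_{4,3}$ containing every rational quartic, obtained as the preimage, under the period morphism, of the locus of principally polarized abelian varieties that split as a product of Jacobians. First I would record that $\IJac:\mathcal{M}_{4,3}\to\mathcal{A}_{30}$ is a \emph{morphism} of quasi-projective varieties: it is holomorphic by Hodge theory, and since $\mathcal{A}_{30}$ is a quotient of a bounded symmetric domain by an arithmetic group, algebraicity is Borel's theorem (Theorem 3.10 of \cite{Bo}).

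Next, let $\mathcal{P}_0\subset\mathcal{A}_{30}$ be the set of p.p.a.v.'s isomorphic, as p.p.a.v.'s, to a product $J(C_1)\times\cdots\times J(C_k)$ of Jacobians of smooth projective curves with $\sum_i g(C_i)=30$. This is the union, over the finitely many partitions $30=g_1+\cdots+g_k$ with $g_i\ge 1$, of the images of the morphisms $\prod_i\mathcal{M}_{g_i}\to\mathcal{A}_{30}$, hence is constructible; each contributing source has dimension $\sum_i\dim\mathcal{M}_{g_i}\le 3\cdot 30-3=87$, far below $\dim\mathcal{A}_{30}=30\cdot 31/2=465$. So the Zariski closure $\mathcal{P}:=\overline{\mathcal{P}_0}$ is a proper closed subvariety of $\mathcal{A}_{30}$, and moreover $\mathcal{P}\cap\mathcal{A}_{30}=\mathcal{P}_0$: the only degenerations of a family of curves whose Jacobians stay inside $\mathcal{A}_g$ (rather than acquiring a toric part) are those of compact type, whose Jacobian is the product of the Jacobians of the normalizations of the components, hence again lies in $\mathcal{P}_0$. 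Now set $V:=\IJac^{-1}(\mathcal{P})$, a closed subvariety of $\mathcal{M}_{4,3}$. If $V=\mathcal{M}_{4,3}$ then $\IJac(X)\in\mathcal{P}=\mathcal{P}_0$ for the Klein quartic $X$, so $\IJac(X)$ would be isomorphic as a p.p.a.v.\ to a product of Jacobians of smooth curves, contradicting Theorem~\ref{Theorem:IJ1}. Hence $V\subsetneq\mathcal{M}_{4,3}$ is a \emph{proper} closed subvariety, so by irreducibility of $\mathcal{M}_{4,3}$ the complement $\mathcal{M}_{4,3}\setminus V$ is dense and Zariski-open. Finally, by the Clemens--Griffiths criterion (Corollary 3.26 of \cite{CG}), a rational smooth quartic $3$-fold $Y$ has $\IJac(Y)$ a product of Jacobians of smooth curves, i.e.\ $[Y]\in V$; thus every $Y$ with $[Y]\notin V$ is irrational, which is the assertion (with this $V$ as the $V$ of the footnote).

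I do not expect a genuine obstacle here: all the real content sits in Theorem~\ref{Theorem:IJ1}, and the remaining ingredients (algebraicity of the period map, irreducibility of $\mathcal{M}_{4,3}$) are standard. The one point that wants care is the bookkeeping in the middle paragraph — upgrading ``$\IJac(X)$ is not a product of Jacobians'' to ``$\IJac(X)$ lies outside the \emph{Zariski closure} of the product locus'', i.e.\ checking that the product-of-Jacobians locus is already closed in $\mathcal{A}_{30}$ (via the compact-type degeneration remark) — so that $V=\IJac^{-1}(\mathcal{P})$ is honestly a proper closed subvariety and irreducibility of $\mathcal{M}_{4,3}$ can be applied.
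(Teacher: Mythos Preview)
Your argument is correct and is essentially the same as the paper's: Borel's theorem makes the period map a morphism, one pulls back the product-of-Jacobians locus $\mathcal{P}\subset\mathcal{A}_{30}$, Theorem~\ref{Theorem:IJ1} shows the preimage is a proper subvariety, and irreducibility of $\mathcal{M}_{4,3}$ finishes. The paper simply asserts that $\mathcal{P}$ is a subvariety, whereas you supply the compact-type degeneration justification for its closedness; this is additional care rather than a different route.
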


\bigskip
\noindent
{\bf Context. }Corollaries \ref{cor:Klein1} and \ref{cor:general} are not new.  Iskovskih-Manin \cite{IM} proved in 1971 that {\em any} smooth quartic $3$-fold $X$ is irrational.  In contrast, Segre had constructed in \cite{Se} (see also \S 9 of \cite{IM}) examples 
of such $X$ that are {\em unirational}: there is a dominant rational map $\Pb^3\dashrightarrow X$.    Iskovskih-Manin prove their theorem by 
developing the ``method of maximal singularities'' to prove that any birational map $X\dashrightarrow X$ has finite order, and noting that this is of course not true for $\Pb^3$.  This initiated the modern theory of 
birational superrigidity; see, e.g. Cheltsov \cite{Ch} for a survey and details.  More recently, 
Colliot-Th\'{e}l\`{e}ne-Pirutka \cite{CP}, building on a method of Voisin 
using the Chow group of $0$-cycles, proved that the very general smooth quartic $3$-fold is not stably rational.

Around the same time as Iskovskih-Manin, Clemens-Griffiths \cite{CG} used their criterion mentioned above to prove that any smooth {\em cubic} $3$-fold $Y$ is irrational, even though any such $Y$ is unirational.  The bulk of their 
proof is showing that $\IJac(Y)$ is not a product of Jacobians of curves.

Intermediate Jacobians have been used (via the 
Clemens-Griffiths criterion) to prove irrationality for  many $3$-folds, but not (as far as we can tell) for smooth quartic $3$-folds; 
see Beauville's survey \cite{B1}, in particular the table on page $6$.    The proof of Theorem \ref{Theorem:IJ1} uses the symmetry of $X$ in a crucial way, and 
follows an idea of Beauville (see \cite{B1} ( \S 3.3), \cite{B2,B3}, and also Zarhin \cite{Z}) to whom we owe an intellectual debt.  Note that \cite{B3} is about quartic $3$-folds, but they are singular; this is, however, enough to deduce that the intermediate Jacobian of the general quartic $3$-fold is 
not a product of Jacobians (cf.\ Lemma 5.6.1 of \cite{B4}).  The point of the present paper is that we find an explicit such example.  The hardest 
part was actually finding the example.  It may also be worth noting that the proofs of all of the results in this paper use technology available already in 1972.

\bigskip
\noindent
{\bf Acknowledgements. }I thank Nick Addington and Jeff Achter for useful discussions; and Ronno Das, J\'{a}nos Koll\'{a}r and an anonymous referee for corrections on an earlier version of this paper. I am also extremely grateful to Curt McMullen, whose many useful comments on an earlier version of this paper greatly improved its exposition.

\section{Proof of Theorem \ref{Theorem:IJ1}}
\label{section:proof}

In this note we always work in the category of principally polarized abelian varieties.  The polarization is crucial for the proofs that follow.  For any p.p.a.v $A$, denote by $\Aut(A)$ the group of automorphisms of $A$ respecting the polarization; in particular $\Aut(A)$ is finite (see, e.g.\ \cite{BL}, Corollary 5.1.9).  Without the polarization this is no longer true: consider the automorphism of 
$A:=\Cb^2/\Zb[i]^2$ induced by $(z,w)\mapsto (2z+w,z+w)$, which is an infinite order algebraic automorphism of $A$.

Recall that the Jacobian ${\rm Jac}(C)$ of a smooth, projective curve $C$ is a p.p.a.v., with polarization induced by the intersection pairing on $H_1(C;\Zb)$. We will need the following.

\begin{lemma}
\label{lemma:Riemann}
Let $C$ be any smooth, projective curve of genus $g\geq 2$ and let 
${\rm Jac}(C)$ denote its Jacobian.  Then for any odd order subgroup $G\subset \Aut({\rm Jac}(C))$ the following hold.
\begin{enumerate}
\item Any cyclic subgroup of $G$ has order at most $4g+2$.
\item If $g\geq 4$ and if $G$ is {\em metacyclic} (meaning that $G$ has a cyclic normal subgroup $N\lhd G$ such that $G/N$ is cyclic) 
then $|G|\leq 9(g-1)$.
\end{enumerate}
\end{lemma}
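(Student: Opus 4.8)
The plan is to reduce both parts to statements about $\Aut(C)$ rather than $\Aut({\rm Jac}(C))$. By the Torelli theorem, the group of polarization-preserving automorphisms of ${\rm Jac}(C)$ equals $\Aut(C)$ when $C$ is hyperelliptic and $\Aut(C)\times\{\pm 1\}$ otherwise; since $|G|$ is odd, in either case the obvious projection carries $G$ isomorphically onto a subgroup $\bar G\subseteq\Aut(C)$ of the same order, and $\bar G$ is cyclic (resp.\ metacyclic) whenever $G$ is. So it suffices to bound odd-order subgroups of $\Aut(C)$. For part (1), a cyclic subgroup of $\Aut(C)$ has order at most $4g+2$ by the classical theorem of Wiman; alternatively this follows from Riemann--Hurwitz applied to $C\to C/\langle\sigma\rangle$ together with the strong constraints on the local monodromy of a cyclic branched cover of $\mathbb{P}^1$ (for instance, such a cover branched over at most two points has genus $0$, so the base-$\mathbb{P}^1$ case always has at least three branch points).

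For part (2) I would apply Riemann--Hurwitz to $C\to C/\bar G=:C'$. Writing $h$ for the genus of $C'$ and $m_1,\dots,m_k\geq 3$ for the (odd) orders of the cyclic stabilizers at the branch points, one gets $2g-2=|\bar G|\cdot\mu$ with $\mu:=2h-2+\sum_{i=1}^{k}(1-1/m_i)>0$; thus $|\bar G|=(2g-2)/\mu$, and it is enough to show $\mu\geq 2/9$ outside a short list of cases. A quick check gives: if $h\geq 2$ then $\mu\geq 2$; if $h=1$ then $k\geq 1$ and $\mu\geq 2/3$; if $h=0$ then $k\geq 3$, with $k\geq 4$ forcing $\mu\geq 2/3$; and for $h=0,\ k=3$ one has $\mu=1-\sum_{i=1}^{3}1/m_i$, which is $\geq 2/9$ for every triple of odd integers $m_i\geq 3$ with $\sum 1/m_i<1$ except $(3,3,5)$ and $(3,3,7)$, where $\mu=\tfrac{2}{15}$ and $\mu=\tfrac{4}{21}$. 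Hence $|\bar G|\leq 9(g-1)$ in every case except possibly the two signatures $(0;3,3,5)$ and $(0;3,3,7)$.

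The main obstacle is to eliminate these two signatures when $\bar G$ is odd-order metacyclic and $g\geq 4$; this is the one place metacyclicity is genuinely used. In either signature $\bar G=\langle x_1,x_2\rangle$ with $x_1,x_2$ of order $3$ and $z:=x_1x_2$ of order $m\in\{5,7\}$. Pick a cyclic normal subgroup $N\lhd\bar G$ with $\bar G/N$ cyclic. Since $\bar G/N$ is abelian we have $[\bar G,\bar G]\subseteq N$, so $\bar G/N$ is a cyclic quotient of $\bar G^{\mathrm{ab}}$, which is itself a quotient of $\mathbb{Z}/3\times\mathbb{Z}/3$; hence $|\bar G/N|\in\{1,3\}$. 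If $|\bar G/N|=1$ then $\bar G=N$ is cyclic and generated by two elements of order $3$, forcing $\bar G=\mathbb{Z}/3$ and $|z|\mid 3$, absurd; so $|\bar G/N|=3$. Since a cyclic group of order $m$ coprime to $3$ cannot surject onto $\mathbb{Z}/3$, the image of $z$ in $\bar G/N$ is trivial, i.e.\ $z\in N$; and $x_1,x_2$ are not both in $N$ (else $\bar G\subseteq N$). Then $m\mid|N|$; the normal closure of $z$ is the unique subgroup $M\leq N$ of order $m$ (its $\bar G$-conjugates all have order $m$ and lie in $N$), and $\bar G=\langle x_1\rangle\,M$ with $\langle x_1\rangle\cap M=1$, so $|\bar G|=3m$. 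Substituting $|\bar G|=3m$ and $\mu=\tfrac{1}{3}-\tfrac{1}{m}$ into Riemann--Hurwitz yields $2g-2=m-3$, i.e.\ $g=(m-1)/2\leq 3$, contradicting $g\geq 4$. This rules out the remaining two signatures, so $|G|\leq 9(g-1)$ in all cases. Beyond this metacyclic case analysis, the only other work is the elementary arithmetic of the triples $(m_1,m_2,m_3)$ together with routine Riemann--Hurwitz bookkeeping.
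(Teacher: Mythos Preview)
Your reduction via Torelli and your treatment of part~(1) match the paper exactly: both invoke the classical Wiman bound after lifting the odd-order $G$ from $\Aut({\rm Jac}(C))$ to $\Aut(C)$.

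For part~(2) the paper does something different: it simply cites Proposition~4.2 of Schweizer, \emph{Metacyclic groups as automorphism groups of compact Riemann surfaces} (Geom.\ Dedicata 190 (2017), 185--197), which already states the bound $|G|\leq 9(g-1)$ for odd-order metacyclic subgroups of $\Aut(C)$ when $g\geq 4$. You instead reprove that result from scratch via Riemann--Hurwitz. Your case analysis is correct: since $|\bar G|$ is odd all ramification indices are $\geq 3$, so the only signatures with $\mu<2/9$ are $(0;3,3,5)$ and $(0;3,3,7)$; your metacyclic argument then forces $|\bar G|=3m$ and hence $g=(m-1)/2\leq 3$ in those two cases, which is exactly the reason the hypothesis $g\geq 4$ appears. (A small sharpening: in those signatures one actually has $\bar G^{\mathrm{ab}}\cong\Zb/3$, not merely a quotient of $(\Zb/3)^2$, since $x_1x_2$ maps to zero in the abelianization; but your weaker statement suffices for $|\bar G/N|\in\{1,3\}$.) The trade-off is clear: the paper's proof is a two-line citation, while yours is self-contained and makes transparent both why the bound is $9(g-1)$ and why $g\geq 4$ is needed.
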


\begin{proof}
For any smooth projective curve $C$ of genus $g\geq 2$ the natural map $\rho: \Aut(C)\to\Aut({\rm Jac}(C))$ is injective; see, e.g. \cite{FM}, Theorem 6.8. The classical Torelli theorem gives that $\rho$ is surjective if 
$C$ is hyperelliptic, and otherwise 
$\Aut({\rm Jac}(C))=\rho(\Aut(C))\times \{\pm {\rm I}\}$.  
Since $|G|$ is assumed to be 
odd, there is a subgroup $\tilde{G}\subset\Aut(C)$ such that $\rho:\tilde{G}\to G$ is an isomorphism.     Both parts of the lemma now follow from the corresponding statements for subgroups of $\Aut(C)$; see e.g. Theorem 7.5 of \cite{FM} (which is classical) and Proposition 4.2 of \cite{Sch}, a result of Schweizer.   
\end{proof}

\begin{proof}[Proof of Theorem \ref{Theorem:IJ1}]

Let $X$ be the Klein quartic $3$-fold, and let $\zeta:=e^{2\pi i/61}$ 
The group $G:=\Zb/61\Zb\rtimes \Zb/5\Zb$ acts on $X$ by automorphisms via the maps
\[
\begin{array}{l}
\phi([x_0:x_1:x_2:x_3:x_4]):=[x_0:\zeta^{3}x_1:\zeta^{-6}x_2:\zeta^{21}x_3:\zeta x_4]\\
\\
\psi([x_0:x_1:x_2:x_3:x_4]):=[x_1:x_2:x_3:x_4:x_0]
\end{array}
\]
of order $61$ and $5$, respectively \footnote{The somewhat surprisingly large order automorphism $\phi$ is based on a similar automorphism on the Klein quartic; see Theorem 3.7 of \cite{GL} for an even more general discussion.}; in fact $G\cong \Aut(X)$ (see \cite{GLMV}, Theorem B), but we will not need this.  For any smooth, degree $d\geq 3$ hypersurface in $\Pb^n, n>1$, the action of $\Aut(X)$ 
on $H^n(X;\Zb)$ is faithful (see, e.g.,\! Chap.1, Cor.\!\! 3.18 of \cite{H}).  Since in addition $\Aut(X)$ preserves the Hodge decomposition of $H^3(X;\Cb)$, it follows that $\Aut(X)$, hence $G$, acts faithfully on $\IJac(X)$ by 
p.p.a.v automorphisms.

Suppose that $X$ is rational.  The Clemens-Griffiths criterion 
gives an isomorphism of p.p.a.v.:
\begin{equation}
\label{eq:CG}
A:=\IJac(X)\cong A_1^{n_1}\times\cdots\times A_r^{n_r}
\end{equation}
where each $A_i:={\rm Jac}(C_i)$ is the Jacobian of a smooth, projective curve $C_i$ and where $A_i\not\cong A_j$ if $i\neq j$.  Now, say that a  p.p.a.v $A$ is {\em irreducible} if any morphism $A'\to A$ of p.p.a.v is $0$ or an isomorphism.  Corollary 3.23 of \cite{CG} states that a p.p.a.v 
A is irreducible if and only if its theta divisor is; it follows that each $A_i$ is irreducible since each $C_i$ is smooth.  Corollary 3.23 of \cite{CG} also states that the decomposition of a p.p.a.v into a direct sum of irreducible p.p.a.v is unique.  It follows that for each $i$: 
\[\Aut(A_i^{n_i})\cong\Aut(A_i)^{n_i}\rtimes S_{n_i}\]
Note that this kind of statement is no longer true if we consider abelian varieties without polarizations.  

Back to the case $A=\IJac(X)$.  The group $G$ acts on $A$ as p.p.a.v. automorphisms.   The uniqueness of the decomposition \eqref{eq:CG} implies that each $A_i^{n_i}$ is $G$-invariant, and that this $G$-action is given by a representation 
\begin{equation}
\label{eq:psi1}
\psi_i:G\to \Aut(A_i^{n_i})\cong\Aut(A_i)^{n_i}\rtimes S_{n_i}.
\end{equation}

Now 
\begin{equation}
\label{eq:dim}
30=\dim(A)=\sum_{i=1}^r n_i\dim(A_i).
\end{equation}

Since the $G$-action on $A$ is faithful and $\Zb/61\Zb$ has prime order, 
there exists some $i$ (after re-labeling assume $i=1$) so that 
$\Zb/61\Zb$ acts faithfully on $A_1^{n_1}$; that is, $\psi_1$ in \eqref{eq:psi1} is injective.  Since $n_1< \dim(A)<61$ and so $S_{n_1}$ has no element of order $61$, the composition of $\psi_1$ with the projection to $S_{n_1}$ is trivial, so that $\Zb/61\Zb$ acts 
on each 
direct factor $A_1$ of $A_1^{n_1}$.

Fix such a direct factor $B\cong A_1$ on which $\Zb/61\Zb$ acts faithfully (such a factor must exist since $\Zb/61\Zb$ acts faithfully on $A_1^{n_1}$, as noted above).  Recall that $B\cong A_1\cong {\rm Jac}(C_1)$ for some smooth projective curve $C_1$ of genus $g\geq 1$.  Note that in fact $g\geq 2$ since otherwise $\dim(B)=1$ and so $A_1$ does not admit a p.p.a.v.  automorphism of order $>6$.   Thus Lemma~\ref{lemma:Riemann}(1) applies, giving 
\[61\leq 4\cdot{\rm genus}(C_1)+2=4\dim(B)+2\]
and so $\dim(A_1)=\dim(B)={\rm genus}(C_1)\geq 15$.  Again by the orbit-stabilizer theorem, the orbit of $B$ in the set of direct factors of $A_1^{n_1}$ under the prime order subgroup $\Zb/5\Zb\subset G$ has $1$ or $5$ elements.  Since $\dim(B)={\rm genus}(C_1)\geq 15$ and 
$n_1\cdot {\rm genus}(C_1)\leq 30$, the latter is not possible; that is, $B$ is $\Zb/5\Zb$-invariant, and so $G$-invariant.

Now, the definition of $\phi$ and $\psi$ above give that $G\cong \Zb/61\Zb\rtimes \Zb/5\Zb$ is a nontrivial semidirect product; that is, $G$ is not a direct product. For any homomorphism $\mu:C\rtimes D\to E$ of a {\em nontrivial} semidirect product of finite groups to any group, if $\mu$ is 
trivial on $D$ then it is not injective on $C$, hence it is trivial if $C$ is simple.  Since the $\Zb/61\Zb$-action on $B$ is faithful, it follows that the $\Zb/5\Zb$ action on $B$ is faithful. From this it follows that the $G$-action on $B$ is faithful (consider the kernel $K$ of the $G$-action, and note that $K\cap \Zb/61\Zb=0$ and so $K<\Zb/5\Zb$, so that $K$ is trivial).

Note that 
\begin{equation}
\label{eq:last}
|G|=61\cdot 5=305 > 261=9\cdot (30-1) > 9({\rm genus}(C_1)-1).
\end{equation}

Since ${\rm genus}(C_1)\geq 15\geq 4$ and since $G$ is metacyclic,  Lemma~\ref{lemma:Riemann}(2) applies. Its conclusion contradicts \eqref{eq:last}. Thus $X$ is not rational.

\end{proof}

\begin{remark}
One might hope to replace the use of Lemma \ref{lemma:Riemann}(2) by something simpler, such as the Hurwitz bound $|\Aut(C)|\leq 84(g-1)$.  However, a quick check of the numerology shows that this is not enough to obtain a contradiction. \end{remark}

\bigskip{\noindent
Dept. of Mathematics\\
University of Chicago\\
E-mail: bensonfarb@gmail.com


\begin{thebibliography}{ABCDEF}
\small

\bibitem[B1]{B1}
A. Beauville, {\em The L\"{u}roth problem}, in Rationality problems in algebraic geometry,1--27, Lect. Notes in Math., 2172, Fond. CIME 
Found. Subser., Springer, 2016.

\bibitem[B2]{B2}
A. Beauville, {\em Non-rationality of the symmetric sextic Fano threefold}, 
in Geometry and arithmetic, EMS Ser. Congr. Rep., 57--60, 
EMS, Z\"{u}rich, 2012.

\bibitem[B3]{B3}
A. Beauville, Non-rationality of the $S_6$-symmetric quartic threefolds, 
{\em Rend. Semin. Mat. Univ. Politec. Torino} 71 (2013), no. 3-4, 385--388.

\bibitem[B4]{B4}
A. Beauville,, Vari\'{e}t\'{e}s de Prym et jacobiennes interm\'{e}diaires, 
{\em Ann. Sci d l'ENS}, ser. 4, tome 10, no. 3 (1977), p.309-391.

%\bibitem[B4]{B4}
%A. Beauville,, Vari\'{e}t\'{e}s de Prym et jacobiennes interm\'{e}diaires, 
%{\em Ann.\ sci.\ de lÕ'{E}.N.S.}\ ,4e s\'{e}rie, tome 10, no 3 (1977), p. 309-391.

\bibitem[BL]{BL}
C. Birkenhake and H. Lange, Complex Abelian Varieties, second ed., Springer, 2004.

\bibitem[Bo]{Bo}
A. Borel, {\em Some metric properties of arithmetic quotients of symmetric spaces and an extension theorem}, Jour. Diff. Geom. 6 (1972), 543--560.

\bibitem[CG]{CG}
C. Clemens and P. Griffiths, {\em The Intermediate Jacobian of the Cubic Threefold}, Annals of Math., Vol. 95, No. 2 (Mar., 1972), pp. 281--356.

\bibitem[Ch]{Ch}
I. Chel'tsov, {\em Birationally rigid Fano varieties}, Russian Math. Surveys 60:5, 875--965. 

\bibitem[CP]{CP}
J.-L. Colliot-Th\'{e}l\`{e}ne and A. Pirutka, {\em Hypersurfaces quartiques de dimension $3$: non-rationalit\'{e} stable}, Ann. Sci. \'{E}c. Norm. Sup. 
(4) 49 , no. 2, 371--397 (2016).

\bibitem[FM]{FM}
B. Farb and D. Margalit, A Primer on Mapping Class Groups, {\it Princeton University Press}, 
Princeton Mathematical Series, Vol. 50, 2012.

\bibitem[GL]{GL}
V. Gonz\'{a}lez-Aguilera and A. Liendo, On the order of an automorphism of a smooth hypersurface, {\em Israel Jour. of Math.}, Vol. 197 (2013), 29--49.

\bibitem[GLMV]{GLMV}
V. Gonz\'{a}lez-Aguilera, A. Liendo, P. Montero and R. Villaflor Loyola,
{\em On a Torelli Principle for automorphisms of Klein hypersurfaces}, Trans. AMS, to appear.

\bibitem[H]{H}
D. Huybrechts, The geometry of cubic hypersurfaces, Cambridge University Press, 2023.

\bibitem[IM]{IM}
V.A. Iskovskih and Y. Manin, {\em Three-dimensional quartics and counterexamples to the L\"{u}roth problem}, Math. USSR Sb. {\bf 15} 141, 1971.

\bibitem[Sch]{Sch}
A. Schweizer, {\em Metacyclic groups as automorphism groups of compact Riemann surfaces}, Geom. Dedicata  190:185--197, 2017.

\bibitem[Se]{Se}
B. Segre, {\em Variazione continua ed omotopia in geometria algebrica},  Ann. Mat. Pura Appl. (4) 50, 149--186, 1960.

\bibitem[Z]{Z}
Y. Zarhin, {\em Cubic surfaces and cubic threefolds, Jacobians and intermediate Jacobians}, in Algebra, arithmetic, and geometry: in honor of Yu. I. Manin., Vol. II, 687--691, Progr. Math., 270, Birkh\"{a}user, 2009.




\end{thebibliography}
\end{document}